\newtheorem{The}{Theorem}[section]
\newtheorem{Lem}[The]{Lemma}
\newtheorem{Cor}[The]{Corollary}
\newtheorem{Pro}[The]{Proposition}
\theoremstyle{definition}
\newtheorem{Def}[The]{Definition}
\newtheorem{Exa}[The]{Example}
\newtheorem{Rem}[The]{Remark}
\newtheorem*{Ass*}{Assumptions}
\numberwithin{equation}{The}
\title{Closed Neighborhood Ideals of Finite Simple Graphs}
\author{Jacob Honeycutt}
\address{Jacob Honeycutt, Department of Mathematics,
University of Tennessee, Knoxville,
227 Ayres Hall,1403 Circle Drive, Knoxville, TN 37996-1320
USA}
\email{jhoney12@vols.utk.edu}
\urladdr{http://www.math.utk.edu/people/bio/Jacob/Honeycutt/}
\author{Keri Sather-Wagstaff}
\address{Keri Sather-Wagstaff, School of Mathematical and Statistical Sciences,
Clemson University,
O-110 Martin Hall, Box 340975, Clemson, SC 29634
USA}
\email{ssather@clemson.edu}
\urladdr{https://ssather.people.clemson.edu/}
\keywords{Cohen-Macaulay,
graph, 
irreducible decomposition,
neighborhood, 
square-free monomial ideal, 
tree}
\subjclass[2010]{Primary: 13F55; 13H10; secondary: 05C05; 05C69; 05E40}
\begin{document}

\maketitle
\pagenumbering{arabic}

\begin{abstract}
We investigate Sharifan and Moradi's closed neighborhood ideal of a finite simple graph, which is a square-free monomial ideal in a polynomial ring over a field. We explicitly describe the minimal irreducible decompositions of these ideals. We also characterize the trees whose closed neighborhood ideals are Cohen-Macaulay; in particular, this property for closed neighborhood ideals of trees is characteristic independent. %xxx
\end{abstract}

\section{Introduction}\label{sec210425z}
Combinatorial commutative algebra allows one to take certain combinatorial objects (e.g., graphs, simplicial complexes, and posets) and encode nontrivial combinatorial information about them algebraically. This allows one to use algebraic tools to study these objects, and it allows us to understand the algebraic objects combinatorially. 
An example of this Villarreal's~\cite{villarreal:cmg,villarreal:ma} edge ideal $I_G$ of a finite simple graph $G$, where one can see the 
irredundant irreducible decompositions of $I_G$ via certain subsets of the vertex set called vertex covers \textit{op.\ cit.}, and one can see  Cohen-Macaulay properties for certain classes of graphs, including trees \textit{op.\ cit.}\ and chordal graphs~\cite{francisco:wscmg,francisco:scmei,MR2231097}. Similar results hold for other constructions using edges and paths~\cite{morey:dcmppi,conca:msgililt,kubik:piwg,paulsen:eiwg}.

In this project we investigate another algebraic construction that sees other combinatorial aspects of $G$: the closed neighborhood ideal $N_G$, as introduced by Sharifan and Moradi~\cite{MR4132629}. It is a square-free monomial ideal whose generators are the closed neighborhoods of the vertices of $G$. See Section~\ref{sec210425a} for a complete definition and basic properties. %xxx

Whereas the irredundant irreducible decomposition of the edge ideal of $G$ is described using the minimal vertex covers of $G$, our construction is decomposed in terms of other subsets of the vertex set that are important for studying networks: the dominating sets~\cite{MR1605684,MR1088576}. This decomposition result is Proposition~\ref{thm210426a}, the culminating result of Section~\ref{sec210425a}. %xxx

Section~\ref{sec210425c} investigates the Cohen-Macaulay property for our construction. The main result there is Theorem~\ref{thm210426b} which characterizes the trees whose closed neighborhood ideals are Cohen-Macaulay. In particular, it shows that the Cohen-Macaulay, unmixed, and  complete intersection properties are equivalent for these ideals, and that they are independent of the characteristic of the ground field.

\begin{Ass*}
For the remainder of this paper, let $G = (V,E)$ be a
finite simple graph with vertex set $V = \{X_1,X_2,\ldots,X_d\}$ and edge set $E$ where an edge between distinct vertices $X_i$ and $X_j$ is denoted $X_iX_j$. 
(The ``simple'' assumption says that $G$ has no loops and no multi-edges. See, e.g., the text of Diestel~\cite{diestel:gt} for unexplained notions from graph theory.) 
Let $\mathbb{K}$ be a field, and consider
the polynomial ring $R = \mathbb{K}[X_1,X_2,\ldots,X_d]$.
\end{Ass*}

\section{Closed Neighborhood Ideals, Domination, and Decompositions} \label{sec210425a}

In this section, we start by introducing the closed neighborhood ideal of our graph $G$. Then we describe 
its irredundant irreducible decomposition in terms of the dominating sets in Proposition~\ref{thm210426a}. %xxx

\begin{Def}
The \textbf{closed neighborhood} of a vertex $X_i$ is 
\[ N(X_i) = \{X_j \mid X_iX_j \in E\}\cup\{X_i\} \subseteq V\]
which includes $X_i$ by definition.
The \textbf{closed neighborhood monomial} of $X_i$ is the square-free monomial
\[ S_i = \prod_{X_j \in N(X_i)} X_j \in R\]
which by definition is divisible by $X_i$.
The \textbf{closed neighborhood ideal} of $G$ is the ideal $N_G \subseteq R$ generated by all of the closed neighborhood monomials of $G$:
\[ N_G = \left( \left\{ S_i \mid v_i \in V \right\} \right) R\subseteq R. \]
\end{Def}
\begin{Exa}
\label{DominationIdealExample}
We compute the closed neighborhood ideal of the following graph 
\begin{equation}
\begin{split}
\xymatrix{
	{X_1} \ar@{-}[r] \ar@{-}[d] & {X_2} \ar@{-}[r] \ar@{-}[d] & {X_3} \ar@{-}[d] \\
	{X_4} & {X_5} & {X_6} 	
}
\label{graph}
\end{split}
\end{equation}
by applying the definition then removing redundant generators:
\begin{align*}
N_G 
&= \left(X_1X_2X_4,X_1X_2X_3X_5,X_2X_3X_6,X_1X_4,X_2X_5,X_3X_6\right)R\\
&= \left(X_1X_4,X_2X_5,X_3X_6\right)R. 
\end{align*}
In particular, the generating set defining  closed neighborhood ideals can be redundant. 
Furthermore, the following graph has the same closed neighborhood ideal as the one for~\eqref{graph}
\begin{equation}
\begin{split}
\xymatrix{
	& {X_4} \ar@{-}[r] & {X_1} \ar@{-}[dr] \ar@{-}[dl] & & \\
	{X_5} & {X_2} \ar@{-}[rr] \ar@{-}[l] & & {X_3} \ar@{-}[r] & {X_6} \\	
}
\label{graph2}
\end{split}
\end{equation}
showing that the association $G\mapsto N_G$ is not injective.
\end{Exa}

Next, we move toward our decomposition result. Here is the relevant definition from graph theory for this 
purpose. See~\cite{MR4180624,MR1605684} for background on this notion. %xxx

\begin{Def}
A \textbf{dominating set} for $G$ is a subset $V' \subseteq V$ such that for each  $X_i \in V$, we have $V'\cap N(X_i)\neq\emptyset$, i.e., either $X_i \in V'$ or $X_i$ is adjacent to a vertex $X_j \in V'$. A dominating set is \textbf{minimal} if it does not properly contain another dominating set. 
\label{dominatingset}
\end{Def}

This definition is deceptively similar to the definition of a vertex cover; see Definition~\ref{vertexCover}. However, they are different, as the next example shows. The key difference
is that dominating sets dominate vertices while vertex covers dominate edges. %xxx

\begin{Exa}
For our graph \eqref{graph}, the set 
$V' = \{X_3,X_4,X_5,X_6\}$
is a dominating set, but it is not minimal since 
$V'' = \{X_4,X_5,X_6\}$ also dominates $G$. The dominating set $V''$ is minimal.
Note that $V''$ is not a vertex cover of $G$; see Definition~\ref{vertexCover}. 
%xxx
\label{DominatingSetExample}
\end{Exa}

\begin{Rem}
The full vertex set $V$ is a dominating set for $G$. If $G$ has no isolated vertices, then every subset $V\smallsetminus\{X_i\}$ is a dominating set for $G$.
Since $V$ is finite, every dominating set for $G$ contains a minimal dominating set. 
\label{containsmin}
\end{Rem}

%The following lemma is key for the proof of our decomposition result below.
%
%\begin{Lem}
%Let $V' \subseteq V$ be given. Then $N_G \subseteq (V')R$ if and only if $V'$ dominates $G$.
%\label{Lemma}
%\end{Lem} %xxx

%\begin{proof}
%For the forward implication, assume that $N_G \subseteq (V')R$. To show that $V'$ is a dominating set for $G$, let $X_i \in V$ be a vertex of $G$. By assumption, we have $S_i \in N_G \subseteq (V')R$. It then follows from properties of monomial ideals that $S_i = X_i X_{i_1} \cdots X_{i_m} \in \left( X_j \right) R$ for some index $X_j\in V'$.
%Then either we have $X_i = X_j\in V'$ or $X_{i_k} = X_j\in V'$ for some $k$. Therefore, $V'$ is a dominating set.
%
%For the converse, we assume that $V'$ dominates $G$. To show that $N_G \subseteq (V')R$, we need to show that each generator $S_i\in N_G$ is in $(V')R$.  Since $V'$ is a dominating set, either $X_i \in V'$ or some $X_{i_k} \in V'$ for some $X_{i_k}$ adjacent to $X_i$. Thus, we have  $S_i \in (V')R$.
%\end{proof}

Here is the aforementioned decomposition result, which essentially follows from~\cite[Lemma~2.2]{MR4132629}.
%xxx

\begin{Pro}\label{thm210426a}
The  ideal $N_G \subseteq R$ has the following irreducible decompositions
\[
N_G = \bigcap \limits_{V'} (V')R = \bigcap \limits_{V' \text{ min.}} (V')R
\]
where the first intersection is taken over all dominating sets for $G$, and the second intersection is taken over all minimal dominating sets for $G$. Furthermore, the second intersection is irredundant.
\end{Pro}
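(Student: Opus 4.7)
The plan is to exploit the standard correspondence for square-free monomial ideals between containment in a prime $(V')R$ and a support condition. Since $(V')R$ is generated by variables, a square-free monomial $m$ lies in $(V')R$ if and only if $\operatorname{supp}(m) \cap V' \neq \emptyset$. Applied to each generator $S_i$ of $N_G$, whose support is $N(X_i)$, this gives $N_G \subseteq (V')R$ if and only if $V' \cap N(X_i) \neq \emptyset$ for every $i$, which is precisely the condition that $V'$ is a dominating set. This immediately yields the containments $N_G \subseteq \bigcap_{V'} (V')R$ and $N_G \subseteq \bigcap_{V' \text{ min.}} (V')R$.

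For the reverse containment, since both ideals are monomial, it suffices to check that every monomial $m \in \bigcap_{V'} (V')R$ is divisible by some $S_i$. Let $W = \operatorname{supp}(m)$. The hypothesis says $W \cap V' \neq \emptyset$ for every dominating $V'$, equivalently $V \setminus W$ contains no dominating set. Because the family of dominating sets is upward closed in the subset lattice (any superset of a dominating set dominates), this in turn is equivalent to $V \setminus W$ itself failing to dominate $G$, i.e., $(V \setminus W) \cap N(X_i) = \emptyset$ for some $i$. Unwinding this gives $N(X_i) \subseteq W$, so $S_i$ divides $m$ and $m \in N_G$. This is the step that carries the conceptual content of the proposition, and the main obstacle is noticing the upward closure of the class of dominating sets; that observation is what converts a ``meets every dominating set'' condition into a ``contains some $N(X_i)$'' condition.

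To pass to the intersection over minimal dominating sets, I would invoke Remark~\ref{containsmin}: every dominating $V'$ contains a minimal one $V''$, and $V'' \subseteq V'$ yields $(V'')R \subseteq (V')R$, so the intersection is unchanged when restricted to minimal dominating sets. For irredundance, given a minimal dominating set $V'$, consider the monomial $m_{V'} = \prod_{X_j \in V \setminus V'} X_j$. Its support is disjoint from $V'$, so $m_{V'} \notin (V')R$. But for any other minimal dominating set $V'' \neq V'$, minimality of $V'$ forces $V'' \not\subseteq V'$ (otherwise $V'$ would properly contain the dominating set $V''$), so $V'' \cap (V \setminus V') \neq \emptyset$ and hence $m_{V'} \in (V'')R$. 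This shows that omitting any single term from the intersection over minimal dominating sets strictly enlarges it, completing the proof.
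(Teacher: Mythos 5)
Your proof is correct, and it is genuinely more self-contained than the paper's. The paper outsources the pivotal equivalence ``$N_G \subseteq (V')R$ if and only if $V'$ dominates $G$'' to \cite[Lemma~2.2]{MR4132629}, and then obtains the containment $N_G \supseteq \bigcap_{V'}(V')R$ indirectly: it invokes the general fact that a square-free monomial ideal admits \emph{some} irreducible decomposition into variable-generated primes $(V_i)R$, observes that each $V_i$ must then be a dominating set, and concludes that the intersection over \emph{all} dominating sets is contained in $\bigcap_i (V_i)R = N_G$. You instead prove both directions from first principles via support conditions on monomials; your key observation --- that upward closure of the family of dominating sets converts ``$\operatorname{supp}(m)$ meets every dominating set'' into ``$V\smallsetminus\operatorname{supp}(m)$ fails to dominate,'' hence ``$N(X_i)\subseteq\operatorname{supp}(m)$ for some $i$'' --- is exactly the content that the citation hides. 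Your irredundance argument also differs: the paper dismisses it in one line (minimality of the dominating sets makes the primes pairwise incomparable, which suffices by prime avoidance), whereas you exhibit an explicit witness monomial $\prod_{X_j\in V\smallsetminus V'}X_j$ lying in every other component but not in $(V')R$. The paper's route is shorter given the external lemma and the structure theory of square-free monomial ideals; yours buys independence from \cite{MR4132629} and a completely elementary, constructive verification of irredundance. The reduction to minimal dominating sets via Remark~\ref{containsmin} is the same in both.
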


\begin{proof}
The second intersection is irredundant by the fact that the intersection is taken over only minimal dominating sets, so any redundancies have already been removed.
	
The containment $\bigcap_{V'} (V')R \subseteq \bigcap_{V' \text{ min.}} (V')R$ follows since the first intersection is over a possibly larger index.
	
The containment $\bigcap_{V'} (V')R \supseteq \bigcap_{V' \text{ min.}} (V')R$ follows since every dominating set contains a minimal dominating set by Remark \ref{containsmin}.
	
By~\cite[Lemma~2.2]{MR4132629} we know that $N_G \subseteq (V')R$ if and only if $V'$ dominates $G$, so we have  $N_G \subseteq \bigcap_{V'} (V')R$ where the intersection is taken over the dominating sets of $G$. %xxx
	
To show that $N_G \supseteq \bigcap_{V'} (V')R$,  recall that $N_G$ is square-free. Therefore there exist subsets $V_1, \ldots, V_n\subseteq V$ such that
\[ 
N_G = \bigcap \limits_{i = 1}^n (V_i)R.
\]
In particular, each $V_i$ satisfies $N_G \subseteq (V_i)R$ so each $V_i$ is a dominating set of $G$ by~\cite[Lemma~2.2]{MR4132629}. Since each is a dominating set, and the intersection $\bigcap_{V'} (V')R$ is taken over all dominating sets, we must have %xxx
\[
N_G = \bigcap \limits_{i = 1}^n (V_i)R \supseteq \bigcap \limits_{V'} (V')R
\]
which completes the proof.
\end{proof}

\begin{Exa}\label{eq210426a}
Consider the closed neighborhood ideal of $N_G$  where $G$ is the graph~\eqref{graph} from Example \ref{DominationIdealExample}. It is straightforward to verify the following
irredundant irreducible decomposition algebraically by hand or using Macaulay2~\cite{m2}.
\begin{align*}
N_G =& \left(X_1X_4,X_2X_5,X_3X_6\right)R\\
=&(X_1,X_2,X_3)R\cap (X_1,X_2,X_6)R\cap(X_1,X_5,X_3)R\cap(X_4,X_2,X_3)R\\
&\cap(X_1,X_5,X_6)R\cap(X_4,X_2,X_6)R\cap(X_4,X_5,X_3)R\cap(X_4,X_5,X_6)R.
 \end{align*}
From this, one can use Proposition~\ref{thm210426a} to read the eight minimal dominating sets for $G$: %xxx
\begin{align*}
\{X_1,X_2,X_3\}&& \{X_1,X_2,X_6\}&&\{X_1,X_5,X_3\}&&\{X_4,X_2,X_3\}\\
\{X_1,X_5,X_6\}&&\{X_4,X_2,X_6\}&&\{X_4,X_5,X_3\}&&\{X_4,X_5,X_6\}.
\end{align*}
On the other hand, one can identify the above the list of minimal dominating sets for $G$ combinatorially by inspecting $G$,
then invoke Proposition~\ref{thm210426a} to obtain the decomposition. %xxx

Furthermore, the next result shows that graph~\eqref{graph2} has the same list of minimal dominating sets as $G$.
\end{Exa}

The next result is potentially useful since it is expensive to compute all the minimal dominating sets of a graph, and it feels less expensive to 
check equality of closed neighborhood ideals, which can be accomplished by comparing the  minimal generating sets.

\begin{Cor}\label{cor210426b}
Let $H$ be another graph with vertex set $V$. Then $N_G=N_H$ if and only if $G$ and $H$ have the same lists of (minimal) dominating sets.
\end{Cor}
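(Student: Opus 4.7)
The plan is to leverage Proposition~\ref{thm210426a} together with the uniqueness of the irredundant irreducible decomposition of a square-free monomial ideal.

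First I would dispatch the easier backward direction. If $G$ and $H$ share the same list of minimal dominating sets, then Proposition~\ref{thm210426a} immediately gives
\[
N_G \;=\; \bigcap_{V' \text{ min.\ dom.\ of } G} (V')R \;=\; \bigcap_{V' \text{ min.\ dom.\ of } H} (V')R \;=\; N_H.
\]
If instead they share the same list of all dominating sets, I would use the same proposition with the intersection indexed over arbitrary dominating sets. So the nontrivial content is the forward implication, plus the observation that ``same minimal dominating sets'' and ``same dominating sets'' are really equivalent conditions on $G$ and $H$. The latter follows from Remark~\ref{containsmin} combined with the trivial fact that any superset of a dominating set is again a dominating set: each dominating set is exactly a superset of some minimal dominating set, so knowing the minimal list determines the full list.

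For the forward direction, assume $N_G = N_H$. By Proposition~\ref{thm210426a}, the second displayed intersection there is an irredundant irreducible decomposition of $N_G$ (resp.\ $N_H$) indexed by the minimal dominating sets of $G$ (resp.\ $H$). I would then invoke the uniqueness of the irredundant irreducible decomposition of a square-free monomial ideal: the set of ideals appearing as components is an invariant of the ideal. Since the monomial prime ideals $(V')R$ and $(V'')R$ coincide if and only if $V' = V''$ (these are determined by the variables they contain), the collections of minimal dominating sets of $G$ and of $H$ must be identical.

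The only potentially nontrivial step is the appeal to uniqueness of the irredundant irreducible decomposition, but this is standard for square-free monomial ideals (they equal the intersection of their minimal primes, all of which are of the form $(V')R$, and the set of minimal primes is an invariant of the ideal). I expect no significant obstacle: the corollary is essentially a direct consequence of Proposition~\ref{thm210426a}, and the proof should be quite short.
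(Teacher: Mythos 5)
Your proposal is correct and follows essentially the same route as the paper: the backward direction via the decomposition in Proposition~\ref{thm210426a}, the forward direction via uniqueness of the irredundant irreducible decomposition, and the observation that the minimal and full lists of dominating sets determine each other. No gaps.
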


\begin{proof}
Since the set of dominating sets for a graph is closed under taking supersets, we see that
$G$ and $H$ have the same lists of  dominating sets if and only if $G$ and $H$ have the same lists of minimal dominating sets.

For the backwards implication in the corollary, assume that $G$ and $H$ have the same lists of  dominating sets.
Then Proposition~\ref{thm210426a} implies that %xxx
$$N_G = \bigcap \limits_{\text{$V'$ dom.\ set $G$}} (V')R =\bigcap \limits_{\text{$V'$ dom.\ set $H$}} (V')R =N_H.$$

For the converse, assume that $N_G=N_H$. Then the uniqueness of the ideals in an irredundant irreducible decomposition of this ideal
works with Proposition~\ref{thm210426a} to imply that  %xxx
$G$ and $H$ have the same lists of minimal dominating sets, as desired.
\end{proof}

\section{Cohen-Macaulayness of Closed Neighborhood Ideals of Trees}\label{sec210425c}

In this section, we characterize the trees whose closed neighborhood ideals are Cohen-Macaulay; see Theorem~\ref{thm210426b}
and Corollary~\ref{cor210426a}. As in~\cite{villarreal:cmg}, our proof utilizes the following notion from graph theory.

\begin{Def}[Matching]
A \textbf{matching} is a set of pairwise non-adjacent edges of a graph.
A matching is  \textbf{maximal} if it is not properly contained in another matching. 
\end{Def}

\begin{Exa}
In our graph \eqref{graph}, some maximal matchings are
\begin{align*}
\{X_1X_2,X_3X_6\} 
&&\text{and}&&
\{X_1X_4,X_2X_5,X_3X_6\} .
\end{align*}
\end{Exa}

Our Cohen-Macaulayness result is stated in terms of the following notion. Note that more general $H$-coronas can be defined, but we only need $K_1$-coronas.
In a small amount of the literature, these are also referred to as ``suspensions.''

\begin{Def}\label{defn210426a}
A \textbf{$K_1$-corona} of the graph $G$ is the graph $G'$ created by adding a ``whisker'' to each vertex of $G$, i.e., it is a graph with vertex set
$\{X_1,\ldots,X_d,Y_1,\ldots,Y_d\}$ and edge set $E\cup\{X_1Y_1,\ldots,X_dY_d\}$.
\end{Def}

\begin{Exa}
The two graphs in Example~\ref{DominationIdealExample} are $K_1$-coronas of the path $P_2$ and the cycle $C_3$, respectively.
\end{Exa}

The next result states that $K_1$-coronas are particularly nice with respect to minimal dominating sets.
It uses the following notion which we introduce for convenience.

\begin{Def}
We write that $G$ is \textbf{domination-unmixed} if all the minimal dominating sets of $G$ have the same size.
\end{Def}

\begin{Exa}
The graphs~\eqref{graph} and~\eqref{graph2} are domination-unmixed by Example~\ref{eq210426a}.
\end{Exa}

\begin{Pro}
If  $H$ is a $K_1$-corona of $G$, then every minimal dominating set for $H$ has exactly $d$ elements and $N_H$ is a complete intersection;
in particular, $N_H$ is unmixed and $H$ is domination-unmixed.
\label{k1tounmixed}
\end{Pro}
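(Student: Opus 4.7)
The plan is to exhibit $N_H$ explicitly as a complete intersection of $d$ monomials in pairwise disjoint sets of variables, and then read off the minimal dominating sets from its irredundant irreducible decomposition via Proposition~\ref{thm210426a}.

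First I would analyze the minimal generating set of $N_H$. For each whisker vertex $Y_i$, the closed neighborhood in $H$ is $\{Y_i, X_i\}$, so its closed neighborhood monomial is $X_iY_i$. For each original vertex $X_i$, the closed neighborhood in $H$ is $N_G(X_i)\cup\{Y_i\}$, so its closed neighborhood monomial is $Y_i\cdot S_i$, where $S_i$ is the closed neighborhood monomial of $X_i$ computed inside $G$. Since $X_i$ divides $S_i$ by definition, the monomial $X_iY_i$ divides $Y_iS_i$, so each generator of the form $Y_iS_i$ is redundant. Letting $R_H = \mathbb{K}[X_1,\ldots,X_d,Y_1,\ldots,Y_d]$, this gives
\[
N_H = (X_1Y_1,\,X_2Y_2,\,\ldots,\,X_dY_d)R_H.
\]

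Since these $d$ generators involve pairwise disjoint sets of variables, they form a regular sequence in $R_H$, so $N_H$ is a complete intersection and in particular is unmixed. Next I would compute the irredundant irreducible decomposition of $N_H$ by distributing intersections over sums:
\[
N_H = \bigcap_{i=1}^{d}(X_iY_i)R_H = \bigcap_{\epsilon\in\{0,1\}^d} \bigl(Z_1^{\epsilon_1},Z_2^{\epsilon_2},\ldots,Z_d^{\epsilon_d}\bigr)R_H,
\]
where $Z_i^0 = X_i$ and $Z_i^1 = Y_i$. These $2^d$ monomial prime ideals all have height $d$ and involve pairwise distinct variable sets, so they are pairwise incomparable and the displayed decomposition is irredundant.

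Finally, Proposition~\ref{thm210426a} identifies the minimal dominating sets of $H$ with the generating sets of the irreducible components above. Each such set has the form $\{Z_1,\ldots,Z_d\}$ with $Z_i\in\{X_i,Y_i\}$, hence has exactly $d$ elements, proving every assertion in the proposition. The only nontrivial step is verifying redundancy of the generators $Y_iS_i$; once the explicit form of $N_H$ is in hand, the rest is bookkeeping.
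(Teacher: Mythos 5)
Your proposal is correct and follows essentially the same route as the paper: identify the whisker generators $X_iY_i$, observe that the generators coming from the original vertices are multiples of these and hence redundant, write $N_H=(X_1Y_1,\ldots,X_dY_d)$, and distribute the intersection to obtain the $2^d$ height-$d$ components, which Proposition~\ref{thm210426a} translates into minimal dominating sets of size $d$. You spell out the regular-sequence and irredundancy checks slightly more explicitly than the paper does, but the argument is the same.
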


\begin{proof}
Use the notation of Definition~\ref{defn210426a} for the $K_1$-corona $H$. 
Then each monomial $X_iY_i$ is a generator of $N_H$. The other generators are of the form $X_iY_iT_i$ for some monomial $T_i$, so these generators are redundant.
Therefore the closed neighborhood ideal of $H$ is 
\[ N_H = (X_1Y_1,\ldots,X_dY_{d})R \]
and this decomposes into the intersection
\[ N_H = \bigcap (Z_{1},\ldots,Z_{d})R \]
where the intersection is over all $2^d$ combinations when choosing $Z_i$ to be $X_i$ or $Y_i$. In each case, the corresponding minimal dominating set 
from Proposition~\ref{thm210426a} has size $d$. %xxx
\end{proof}
We next provide a few lemmas for subsequent use. The first one uses the following notion mentioned in the introduction.

\begin{Def}
A \textbf{vertex cover} for $G$ is a subset $V' \subseteq V$ such that for each edge $X_iX_j$ in $G$, we have $V'\cap \{X_i,X_j\}\neq\emptyset$, i.e., either $X_i \in V'$ or $X_j \in V'$. A vertex cover is \textbf{minimal} if it does not properly contain another vertex cover. 
\label{vertexCover}
\end{Def}

The definitions of dominating set and vertex cover look similar, but they are crucially different. 
One way to summarize the difference is that
a dominating set dominates every \emph{vertex} while a vertex cover covers every \emph{edge}. 
On the other hand, one actual similarity is in the next result. 

\begin{Lem}
If $G$ has no isolated vertices, then every vertex cover $V'$ of $G$ is a dominating set for $G$.
\label{vertodom}
\end{Lem}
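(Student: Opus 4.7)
The plan is to argue directly from the definitions by fixing an arbitrary vertex $X_i \in V$ and showing $V' \cap N(X_i) \neq \emptyset$, splitting into two cases depending on whether $X_i \in V'$.

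First, if $X_i \in V'$, then since $X_i \in N(X_i)$ by the definition of closed neighborhood, we immediately have $X_i \in V' \cap N(X_i)$, so this case is trivial.

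Second, suppose $X_i \notin V'$. This is where the no-isolated-vertices hypothesis enters: since $X_i$ is not isolated, there exists an edge $X_iX_j \in E$ incident to $X_i$. Because $V'$ is a vertex cover, we have $V' \cap \{X_i,X_j\} \neq \emptyset$, and since $X_i \notin V'$ by assumption, we conclude $X_j \in V'$. But $X_j \in N(X_i)$ because $X_iX_j \in E$, so $X_j \in V' \cap N(X_i)$, as needed.

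There is no real obstacle here; the argument is just a matter of unpacking the two definitions side by side and observing where the hypothesis on isolated vertices is used (namely, to guarantee the existence of an incident edge in the second case, which is precisely what a vertex cover gives information about). No induction or auxiliary construction is required.
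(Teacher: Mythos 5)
Your proof is correct and follows essentially the same argument as the paper's, just written out more carefully with the explicit case split on whether $X_i \in V'$. Nothing further is needed.
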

\begin{proof}
By assumption, every edge of $G$ is adjacent to at least one vertex in $V'$. Since $G$ has no isolated vertices, it must be that every vertex is either in $V'$ or adjacent to an edge which is adjacent to a vertex in $V'$. Therefore, $V'$ is a dominating set.
\end{proof}

The converse of Lemma~\ref{vertodom} fails in general, as we see next.

\begin{Exa}
In the graph~\eqref{graph}, the set $\{X_4,X_5,X_6\}$ is a dominating set but not a vertex cover.
\end{Exa}

Our final lemma is  similar  to K\H onig's Theorem. 

\begin{Lem}
For a domination-unmixed bipartite graph $G$ with no isolated vertices, each minimal dominating set has the same size as the largest maximal matching.
\label{konig}
\end{Lem}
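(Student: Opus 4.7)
My plan is to reduce the claim to a short chain of elementary inequalities built from three standard facts: Gallai's identity $\alpha(G) + \tau(G) = |V|$ (valid for any graph), K{\H o}nig's theorem $\tau(G) = \alpha'(G)$ (valid for any bipartite $G$), and the observation that every maximal independent set of $G$ is a minimal dominating set. Throughout, $\alpha(G)$, $\alpha'(G)$, $\tau(G)$, and $\gamma(G)$ denote the independence, matching, vertex cover, and domination numbers, respectively.

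First I would verify the ``maximal independent $\Rightarrow$ minimal dominating'' observation: if $I$ is a maximal independent set, then $I$ is dominating (every $v \notin I$ must have a neighbor in $I$, or else $I \cup \{v\}$ would contradict maximality), and it is minimal dominating since for each $v \in I$ the independence of $I$ forces $v$ to have no neighbor in $I$, so $v$ is not dominated by $I \setminus \{v\}$. Combining this observation with the domination-unmixed hypothesis, every maximal independent set of $G$ has size exactly $\gamma(G)$; applying this to a largest such set gives $\alpha(G) = \gamma(G)$.

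Now Gallai's identity together with K{\H o}nig's theorem yields $\alpha(G) + \alpha'(G) = |V|$, so $\alpha'(G) = |V| - \gamma(G)$. Lemma~\ref{vertodom} says every minimum vertex cover is a dominating set, hence $\gamma(G) \le \tau(G) = \alpha'(G)$; and any matching trivially satisfies $\alpha'(G) \le |V|/2$. Chaining these,
\[ \gamma(G) \;\le\; \alpha'(G) \;=\; |V| - \gamma(G) \;\le\; |V|/2, \]
which forces $\gamma(G) = \alpha'(G) = |V|/2$, as required. I expect the main conceptual hurdle to be the ``maximal independent $\Rightarrow$ minimal dominating'' observation, since it is the only non-numerical input; once $\alpha(G) = \gamma(G)$ is in hand, the remaining inequalities collapse automatically to equality and no further graph-theoretic construction is needed.
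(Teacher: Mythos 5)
Your proof is correct, and it takes a genuinely different route from the paper's. The paper works concretely with the bipartition $V = V_1 \cup V_2$: since $G$ has no isolated vertices, each $V_i$ is a minimal dominating set (removing $v \in V_i$ leaves $v$ undominated because $V_i$ is independent), so domination-unmixedness forces $|V_1| = |V_2| = k$ and hence $|V| = 2k$; then Lemma~\ref{vertodom} gives $\tau(G) \ge k$, while $V_1$ is itself a vertex cover of size $k$, so $\tau(G) = k$ and K\H onig's theorem finishes. Your argument replaces the bipartition classes by the general observation that every maximal independent set is a minimal dominating set --- of which $V_1$ and $V_2$ are special cases --- and then substitutes Gallai's identity together with the trivial bound $\alpha'(G) \le |V|/2$ for the paper's explicit count $|V| = 2k$. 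Your squeeze $\gamma(G) \le \alpha'(G) = |V| - \gamma(G) \le |V|/2$ does close correctly: the last inequality gives $\gamma(G) \ge |V|/2$ and the first chain gives $\gamma(G) \le |V|/2$, so all terms collapse to $|V|/2$. What your version buys is a cleaner separation of where each hypothesis enters: bipartiteness is used only through K\H onig, and domination-unmixedness only through the single identity $\alpha(G) = \gamma(G)$, which makes the structural content (these graphs are ``efficiently dominated'' with $\gamma = |V|/2$) more visible. The cost is reliance on two additional named theorems where the paper gets by with a direct count on the bipartition.
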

\begin{proof}
Since $G$ is bipartite, let $V=V_1\cup V_2$ be a bipartition of $V$. Note that $V_1$ and $V_2$ are dominating sets of $G$ since every vertex is either in $V_i$ or adjacent to an element in $V_i$.
Furthermore, $V_i$ is a minimal dominating set for $G$ since the removal any vertex from $V_i$ causes that vertex not to  be dominated. 
By assumption, this implies that $|V_1| = |V_2| = k$. 

We claim that every vertex cover $V'$ of $G$ has size $|V'| \ge k$. To show this, we note that $V'$ is a dominating set by Lemma \ref{vertodom}. Therefore, it contains a minimal dominating set $V'' \subseteq V'$. Then $|V'| \ge |V''| = k$, as claimed.

Therefore we have that the smallest minimal vertex cover has size $k$, and therefore by K\H onig's Theorem, we know that the largest maximal matching has size $k$.
\end{proof}

We conclude with the main results of this section. 

\begin{The}\label{thm210426b}
A tree $T$ is a $K_1$-corona  if and only if it is domination-unmixed.
\end{The}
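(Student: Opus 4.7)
The forward direction follows at once from Proposition~\ref{k1tounmixed}, which shows that every minimal dominating set of a $K_1$-corona has exactly $d$ elements.

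For the converse, I would proceed by strong induction on $|V(T)|$. The base case $|V(T)|=2$ gives $T=K_2$, the $K_1$-corona of $K_1$. For the inductive step, suppose $T$ is a domination-unmixed tree on at least three vertices, and choose a longest path $v_0,v_1,\ldots,v_\ell$ in $T$. Then $v_0$ is a leaf and, by maximality of the path, every neighbor of $v_1$ off the path is also a leaf.

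The argument hinges on two structural claims about the longest path, both of which I would prove contrapositively via a ``swap'' construction at $v_1$. \emph{Claim~1:} $v_1$ has $v_0$ as its unique leaf neighbor, so $\deg_T(v_1)=2$. If $v_1$ had a second leaf $u_0$, take a minimal dominating set $D\ni v_1$ (so $v_0,u_0\notin D$ by minimality), let $P=\{w\in N_T(v_1):N_T[w]\cap D=\{v_1\}\}$ be the non-$v_1$ private neighbors of $v_1$ in $D$ (which necessarily contains $\{v_0,u_0\}$, since the unique neighbor of each leaf is $v_1$), and form $D'=(D\setminus\{v_1\})\cup P$. A direct check shows $D'$ dominates $T$, and a private-neighbor analysis that exploits the tree structure shows $D'$ is minimal: each $u\in D\setminus\{v_1\}$ retains its original private neighbor in $D'$ (that neighbor cannot lie in $N_T[v_1]$, for otherwise $v_1\in D$ would serve as another dominator and contradict privateness), while each $w\in P$ serves as its own private neighbor in $D'$ since its only $T$-neighbor $v_1$ has been removed. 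Thus $|D'|=|D|+|P|-1\geq|D|+1$, contradicting domination-unmixedness. \emph{Claim~2:} $v_2$ has a leaf neighbor in $T$. This is established analogously: assuming otherwise, one exhibits a minimal dominating set $D$ with $v_1\in D$ in which $v_2$ is also a private neighbor of $v_1$ (so $P\supseteq\{v_0,v_2\}$), and the same swap then produces $D'$ with $|D'|\geq|D|+1$.

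Assuming both claims, set $T^*=T-\{v_0,v_1\}$, a tree on $|V(T)|-2$ vertices. Using Claim~1, every minimal dominating set of $T$ contains exactly one of $v_0,v_1$, and removing that vertex together with $v_0$ if necessary yields a minimal dominating set of $T^*$ of size $\gamma(T)-1$; conversely, adjoining $v_1$ extends any minimal dominating set of $T^*$ to one of $T$. Hence $T^*$ is domination-unmixed, so by induction it is the $K_1$-corona of some tree $G^*$. Claim~2 places $v_2$ in the base $G^*$ of this corona (rather than among its whiskers), so attaching the pendant edge $v_0v_1$ at $v_2$ exhibits $T$ as the $K_1$-corona of the tree $G^*\cup\{v_1\}$ with additional edge $v_1v_2$. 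I expect the main obstacle to be the existence portion of Claim~2 --- producing a minimal dominating set in which $v_2$ lands in $P$ when $v_2$ has no leaf neighbor --- together with verifying the size-shifting correspondence between minimal dominating sets of $T$ and $T^*$ with the necessary care about those that contain $v_0$ in place of $v_1$.
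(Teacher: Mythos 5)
Your strategy is genuinely different from the paper's: the paper avoids induction altogether, observing that the two classes of the bipartition of $T$ are minimal dominating sets of a common size $k$, deducing via Lemma~\ref{vertodom} and K\H onig's Theorem that $T$ has a matching of size $k=|V(T)|/2$, and then showing each matched edge must contain a leaf by exhibiting a smaller dominating set otherwise. Your longest-path induction is a plausible alternative (it is the standard route to the well-dominated-trees characterization), but as written Claim~1 has a genuine gap in the minimality verification for $D'=(D\setminus\{v_1\})\cup P$. The assertion that each $u\in D\setminus\{v_1\}$ ``retains its original private neighbor'' can fail when $v_2\in P$: the private neighbor $p_u$ does lie outside $N_T[v_1]$, but nothing prevents it from being adjacent to $v_2$, and once $v_2$ enters $D'$ that adjacency destroys privateness. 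Concretely, take the caterpillar with spine $v_1v_2v_3v_4v_5v_6$ and leaves $v_0,u_0$ attached to $v_1$, and $D=\{v_1,v_4,v_5\}$: this is a minimal dominating set containing $v_1$, one computes $P=\{v_0,u_0,v_2\}$, and in $D'=\{v_0,u_0,v_2,v_4,v_5\}$ the vertex $v_4$ has no private neighbor ($v_3$ is now also dominated by $v_2$, while $v_4$ and $v_5$ dominate each other and $v_6$ is private to $v_5$), so $D'$ is not minimal. In that example a minimal dominating subset of $D'$ still happens to have size different from $|D|$, but your argument supplies no reason why, in general, a minimal dominating set inside a non-minimal $D'$ could not collapse back to size exactly $|D|$; without that, no contradiction with domination-unmixedness is obtained.

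Beyond this, the two steps you yourself flag are exactly where further real work is needed: Claim~2 requires an explicit construction of a minimal dominating set in which $v_2$ is a private neighbor of $v_1$, and the correspondence between minimal dominating sets of $T$ and of $T^*=T-\{v_0,v_1\}$ is delicate, since a minimal dominating set of $T$ containing $v_1$ need not restrict to a dominating set of $T^*$ when $v_2$ was dominated only by $v_1$. I would recommend either repairing the swap by analyzing precisely which vertices of $D\setminus\{v_1\}$ become redundant after adding $P$, or adopting the paper's matching-based argument, which sidesteps all of these issues in a few lines.
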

\begin{proof}
The forward implication follows from Proposition~\ref{k1tounmixed}.

For the converse, assume that $T$ is domination-unmixed. Since $T$ is a tree, it is bipartite. Let $V_1$, $V_2$, and $k$ be as in the proof of Lemma~\ref{konig}; this result implies that the largest maximal matching must have size $k$. Let $\{x_1y_1,\ldots,x_ky_k\}$ be such a matching.

We next need to show that for  $i = 1,\ldots,k$, either $\deg{x_i} = 1$ or $\deg{y_i} = 1$. Suppose this is not true, and reorder the vertices if necessary to assume that $\deg(x_1),\deg(y_1)\geq 2$. Then we have
$N(x_1)=\{x_1,a_1,\ldots,a_p\}$ and $N(y_1)=\{y_1,b_1,\ldots,b_q\}$ with $p,q \ge 1$. 

Assume without loss of generality that $x_1 \in V_1$ and $y_1 \in V_2$. 
Each set $V_i$ is a minimal dominating set of size $k = p + q + m + 1$. Since  $T$ is domination-unmixed,  this is the size of the smallest minimal dominating set.

We partially sketch $T$ as  
\[
\xymatrix@C=0.5em{V_1: &&&
	{x_1} \ar@{-}[d] \ar@{-}[dr] \ar@{-}[rrrd] & {\alpha_1} \ar@{-}[d] & {\cdots} & {\alpha_p} \ar@{-}[d] & {b_1} \ar@{-}[d] & {\cdots} & {b_q} \ar@{-}[d] & {c_1} \ar@{-}[d] & {\cdots} & {c_m} \ar@{-}[d] \\
	V_2:&&&
	{y_1} \ar@{-}[rrrru] \ar@{-}[rrrrrru] & {a_1} & {\cdots} & {a_p} & {\beta_1} & {\cdots} & {\beta_q} & {\gamma_1} & {\cdots} & {\gamma_m}
}
\]
where the vertical edges comprise the given maximal matching.
Also note that if $m > 0$, then for each $i$, one of the two vertices $c_i$ or $\gamma_i$ is adjacent to some $a_j$, $b_j$, $\alpha_j$, $\beta_j$, $c_j$, or $\gamma_j$ because $T$ is connected and neither $c_i$ nor $\gamma_i$ can be adjacent to $x_1$ or $y_1$ by construction. We can order the vertices $\{c_1,\ldots,c_m\}$ and $\{\gamma_1,\ldots,\gamma_m\}$ so that if $c_i$ or $\gamma_i$ is adjacent to $c_j$ or $\gamma_j$, then $j < i$. Note that no $c_i$ can be adjacent to another $c_j$, and likewise for any $\gamma_i$.

It is straightforward to show that the set $\{a_1,\ldots,a_p,b_1,\ldots,b_q,c_1,\ldots,c_m\}$ dominates $T$.
This is a set of size $p + q + m$, and a minimal dominating set contained in it cannot be larger. Since this is smaller than $p + q + m + 1$,  we have a contradiction since we assumed $T$ is domination-unmixed. Therefore $\deg{x_i} = 1$ or $\deg{y_i} = 1$, and we have that $T$ is a $K_1$-corona.
\end{proof}

\begin{Cor}
\label{cor210426a}
For a tree $T$, the following conditions are equivalent:
\begin{enumerate}[\rm (i)]
\item\label{cor210426a4} $T$ is domination-unmixed,
\item\label{cor210426a1} $N_T$ is unmixed,
\item\label{cor210426a2} $N_T$ is Cohen-Macaulay, and
\item\label{cor210426a3} $N_T$ is a complete intersection.
\end{enumerate}
In particular, the Cohen-Macaulay property for $N_T$ is characteristic-independent.
\end{Cor}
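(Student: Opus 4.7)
The plan is to prove the equivalence of (i)--(iv) by closing the cycle (iv) $\Rightarrow$ (iii) $\Rightarrow$ (ii) $\Rightarrow$ (i) $\Rightarrow$ (iv). The hard work has already been done: Proposition~\ref{k1tounmixed} provides (iv) whenever $T$ is a $K_1$-corona, and Theorem~\ref{thm210426b} tells us that $T$ is a $K_1$-corona whenever (i) holds. Everything else is either standard commutative algebra or a direct consequence of the decomposition in Proposition~\ref{thm210426a}.

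The implications (iv) $\Rightarrow$ (iii) $\Rightarrow$ (ii) are classical and I would simply cite a standard reference: a complete intersection in a polynomial ring is Cohen-Macaulay, and a Cohen-Macaulay homogeneous quotient of a polynomial ring is unmixed. The combinatorial translation enters in (ii) $\Rightarrow$ (i). Since $N_T$ is a square-free monomial ideal, its associated primes coincide with its minimal primes, and by Proposition~\ref{thm210426a} these are precisely the primes $(V')R$ as $V'$ ranges over the minimal dominating sets of $T$. Because $(V')R$ is generated by $|V'|$ distinct variables, its height equals $|V'|$, and so $N_T$ being unmixed is equivalent to all minimal dominating sets of $T$ having a common cardinality---that is, to condition (i).

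For (i) $\Rightarrow$ (iv), I would first invoke Theorem~\ref{thm210426b} to realize $T$ as a $K_1$-corona, then apply Proposition~\ref{k1tounmixed} to conclude that $N_T$ is a complete intersection. This closes the cycle. For the final clause, observe that condition (i) is a purely combinatorial property of $T$ with no reference to the ground field $\mathbb{K}$; since (i) is equivalent to (iii) by the preceding argument, the Cohen-Macaulay property for $N_T$ is independent of $\operatorname{char}(\mathbb{K})$.

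The main obstacle for this corollary has already been absorbed into Theorem~\ref{thm210426b}, so what remains is essentially bookkeeping. The subtlest piece is (ii) $\Rightarrow$ (i), where one must match the algebraic height of $(V')R$ with the combinatorial size $|V'|$ and then pass through the irredundancy statement in Proposition~\ref{thm210426a}; the remaining arrows are either invocations of earlier results in the paper or textbook facts about complete intersections and Cohen-Macaulay ideals.
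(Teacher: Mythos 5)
Your proposal is correct and follows essentially the same route as the paper: the standard implications (iv) $\Rightarrow$ (iii) $\Rightarrow$ (ii), the equivalence of unmixedness with domination-unmixedness via the irredundant decomposition in Proposition~\ref{thm210426a}, and (i) $\Rightarrow$ (iv) via Theorem~\ref{thm210426b} combined with Proposition~\ref{k1tounmixed}. The only difference is cosmetic: you spell out the height computation behind (ii) $\Rightarrow$ (i), which the paper leaves implicit in its citation of Proposition~\ref{thm210426a}.
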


\begin{proof}
The equivalence $\eqref{cor210426a4}\iff\eqref{cor210426a1}$ is from Proposition~\ref{thm210426a}, and the implications  %xxx
$\eqref{cor210426a3}\implies\eqref{cor210426a2}\implies\eqref{cor210426a1}$ are standard.
And the implication $\eqref{cor210426a4}\implies\eqref{cor210426a3}$ follows from
Proposition~\ref{k1tounmixed} and Theorem~\ref{thm210426b}.
\end{proof}

\section*{Acknowledgments}
We are grateful to Mehrdad Nasernejad for pointing us to~\cite{MR4132629}. %xxx

%\bibliography{../+new}
%DominationIdealsandUnmixedness}
\providecommand{\bysame}{\leavevmode\hbox to3em{\hrulefill}\thinspace}
\providecommand{\MR}{\relax\ifhmode\unskip\space\fi MR }
% \MRhref is called by the amsart/book/proc definition of \MR.
\providecommand{\MRhref}[2]{%
  \href{http://www.ams.org/mathscinet-getitem?mr=#1}{#2}
}
\providecommand{\href}[2]{#2}

\bibliographystyle{amsplain}

\end{document}